\documentclass[12pt]{article}
\usepackage{amsthm,amsfonts,amssymb,amsmath}
\IfFileExists{stmaryrd.sty}{\usepackage{stmaryrd}}{}
\usepackage{cite}
\usepackage{epsfig}
\usepackage{url}
\usepackage{xcolor,tikz}
\usetikzlibrary{positioning,arrows.meta,calc,matrix,shapes.geometric,decorations}
\usepackage{lmodern}
\usepackage{multicol,graphicx}
\usepackage{fullpage}
\IfFileExists{bbm.sty}{\usepackage{bbm}}{}
\usepackage{svg}
\usepackage[shortlabels]{enumitem}
\usepackage{float,needspace}
\usepackage[hidelinks]{hyperref}
\newcommand{\graphsmall}{\fontsize{9.5}{11.5}\selectfont}
\definecolor{col1}{HTML}{0072B2}
\definecolor{col2}{HTML}{D55E00}
\definecolor{col3}{HTML}{009E73}
\definecolor{col4}{HTML}{8C568A}
\definecolor{col5}{HTML}{333333}
\tikzset{
  edge/.style={line width=1.3pt,line cap=round},
  e1/.style={edge,draw=col1,solid},
  e2/.style={edge,draw=col2,dash pattern=on 4pt off 2.5pt},
  e3/.style={edge,draw=col3,dash pattern=on 0pt off 3pt},
  e4/.style={edge,draw=col4,dash pattern=on 4pt off 2.5pt on 0pt off 2.5pt},
  e5/.style={edge,draw=col5,
             dash pattern=on 5pt off 2.5pt on 0pt off 2.5pt on 0pt off 2.5pt},
  bridge/.style={preaction={draw=white,solid,line width=5.5pt}},
  vertex/.style={line width=1pt,inner sep=0pt,outer sep=0pt,
                 font=\sffamily\bfseries\fontsize{7}{8.4}\selectfont,text=black},
  v1/.style={vertex,circle,minimum size=6.2mm,draw=col1,fill=col1!18!white},
  v2/.style={vertex,rectangle,minimum size=6.2mm,draw=col2,fill=col2!18!white},
  v3/.style={vertex,regular polygon,regular polygon sides=3,
            minimum size=8.2mm,draw=col3,fill=col3!18!white},
  v4/.style={vertex,diamond,aspect=1,minimum size=8.2mm,
            draw=col4,fill=col4!18!white},
  v5/.style={vertex,regular polygon,regular polygon sides=5,
            minimum size=8.2mm,draw=col5,fill=col5!12!white},
  edgelabel/.style={font=\sffamily\bfseries\graphsmall,fill=white,
                    inner sep=3pt,text=black},
  vlabel/.style={font=\graphsmall,inner sep=1pt,text=black},
  block/.style={font=\fontsize{12}{14.4}\selectfont,text=black!65}
}

\newcommand{\GraphCoordinates}{%
  \coordinate (a1) at (-3.4,4.1);
  \coordinate (b1) at (-3.4,1.5);
  \coordinate (p14) at (-5.1,2.8);
  \coordinate (p13) at (-3.4,2.8);
  \coordinate (p12) at (-1.7,2.8);
  \coordinate (a2) at (3.4,4.1);
  \coordinate (b2) at (3.4,1.5);
  \coordinate (p21) at (1.7,2.8);
  \coordinate (p24) at (3.4,2.8);
  \coordinate (p23) at (5.1,2.8);
  \coordinate (a3) at (3.4,-1.5);
  \coordinate (b3) at (3.4,-4.1);
  \coordinate (p34) at (1.7,-2.8);
  \coordinate (p31) at (3.4,-2.8);
  \coordinate (p32) at (5.1,-2.8);
  \coordinate (a4) at (-3.4,-1.5);
  \coordinate (b4) at (-3.4,-4.1);
  \coordinate (p41) at (-5.1,-2.8);
  \coordinate (p42) at (-3.4,-2.8);
  \coordinate (p43) at (-1.7,-2.8);
}
\newcommand{\GraphLabels}{%
  \foreach \i in {1,2,3,4}{%
    \node[vlabel,above=5mm] at (a\i) {$a_\i$};
    \node[vlabel,below=5mm] at (b\i) {$b_\i$};
  }
  \node[vlabel,above right=3.5mm] at (p12) {$v_{1,2}$};
  \node[vlabel,above left=3.5mm] at (p14) {$v_{1,4}$};
  \node[vlabel,above left=3.5mm] at (p21) {$v_{2,1}$};
  \node[vlabel,above right=3.5mm] at (p23) {$v_{2,3}$};
  \node[vlabel,below right=3.5mm] at (p32) {$v_{3,2}$};
  \node[vlabel,below left=3.5mm] at (p34) {$v_{3,4}$};
  \node[vlabel,below left=3.5mm] at (p41) {$v_{4,1}$};
  \node[vlabel,below right=3.5mm] at (p43) {$v_{4,3}$};
  \node[vlabel,left=5mm] at (p13) {$v_{1,3}$};
  \node[vlabel,right=5mm] at (p24) {$v_{2,4}$};
  \node[vlabel,right=5mm] at (p31) {$v_{3,1}$};
  \node[vlabel,left=5mm] at (p42) {$v_{4,2}$};
  \node[block] at (-5.3,4.4) {$B_1$};
  \node[block] at (5.3,4.4) {$B_2$};
  \node[block] at (5.3,-4.4) {$B_3$};
  \node[block] at (-5.3,-4.4) {$B_4$};
}
\newcommand{\GraphCrossEdges}[6]{%
  \draw[e#1] (p12) -- (p21);
  \draw[e#2] (p14) .. controls (-6.5,2.8) and (-6.5,-2.8) .. (p41);
  \draw[e#3] (p23) .. controls (6.5,2.8) and (6.5,-2.8) .. (p32);
  \draw[e#4] (p34) -- (p43);
  \draw[e#5,bridge] (p13) -- (p31);
  \draw[e#6,bridge] (p24) -- (p42);
}
\newcommand{\GraphFrame}{%
  \pgfresetboundingbox
  \path[use as bounding box] (-6.65,-6.25) rectangle (6.65,4.9);
}

\numberwithin{equation}{section}

\newtheorem{thm}[equation]{Theorem}

\newtheorem{lem}[equation]{Lemma}

\theoremstyle{definition}

\newtheorem*{ai}{AI Declaration}

\theoremstyle{remark}

\title{The List Total Colouring Conjecture is False}
\author{Jonathan A. Noel\thanks{Department of Mathematics and Statistics, University of Victoria, Victoria, B.C., Canada. E-mail: {\tt noelj@uvic.ca}. Research supported by NSERC Discovery Grant RGPIN-2021-02460.}}

\DeclareTextCompositeCommand{\v}{OT1}{l}{l\nobreak\hspace{-.1em}'}
\DeclareTextCompositeCommand{\v}{OT1}{t}{t\nobreak\hspace{-.1em}'\nobreak\hspace{-.15em}}

\begin{document}
\maketitle

\begin{abstract}
We exhibit a cubic graph with total chromatic number equal to four and list total chromatic number five. This disproves the List Total Colouring Conjecture of Borodin--Kostochka--Woodall, Juvan--Mohar--\v{S}krekovski and Hilton--Johnson from the late 1990s. ChatGPT 6 Astra Ultra discovered the counterexample with little input from the author.
\end{abstract}

\section{Introduction}

A \emph{total colouring} of a loopless multigraph $G$ assigns a colour to each vertex and edge so that adjacent vertices receive different colours, edges sharing an endpoint receive different colours, and each edge receives a colour different from those of its endpoints. The \emph{total chromatic number} of $G$, denoted $\chi''(G)$, is the minimum number of colours in any total colouring of $G$. The \emph{list total chromatic number}, denoted $\chi''_\ell(G)$, is the minimum $k$ such that, for every assignment of lists of $k$ colours to the vertices and edges of $G$, there exists a total colouring in which each vertex and edge receives a colour from its list. These notions are closely related to the usual chromatic number, list chromatic number, chromatic index and list chromatic index of a graph $G$, denoted by $\chi(G),\chi_\ell(G),\chi'(G)$ and $\chi_\ell'(G)$, respectively.

The \emph{List Total Colouring Conjecture (LTCC)}, proposed independently by Borodin, Kostochka and Woodall~\cite{BorodinKostochkaWoodall97}, Juvan, Mohar and \v{S}krekovski~\cite{JuvanMoharSkrekovski98} and Hilton and Johnson~\cite{HiltonJohnson99}, says that every multigraph $G$ satisfies 
\[\chi''_\ell(G)=\chi''(G).\] 
This is closely related to the well-studied \emph{List Edge Colouring Conjecture (LECC)}, formulated independently by Vizing, Gupta, Albertson and Collins,
and Bollob\'as and Harris (see~\cite{BorodinKostochkaWoodall97}), that $\chi_\ell'(G)=\chi'(G)$ for every multigraph $G$. A conjecture of Gravier and Maffray~\cite{GravierMaffray97} that is even stronger than the LECC says that every claw-free graph $G$ satisfies $\chi_\ell(G)=\chi(G)$. 

In support of the LTCC, Juvan, Mohar and \v{S}krekovski~\cite{JuvanMoharSkrekovski98} proved that it holds for graphs of maximum degree at most two and that every cubic graph $G$ satisfies $\chi''_\ell(G)\leq 5$. Additional results and discussion of list total colouring can be found in~\cite{Woodall01,KostochkaWoodall02,Woodall10,Woodall07,HetheringtonWoodall06,Woodall06}.

For a graph $G$ and integer $k\geq2$, $G^k$ is the graph obtained from $G$ by adding an edge between any two non-adjacent vertices at distance at most $k$ in $G$. Kostochka and Woodall~\cite{KostochkaWoodall01} proposed the \emph{List Square Colouring Conjecture (LSCC)} that $\chi_\ell(G^2)=\chi(G^2)$ for every graph $G$. The LSCC was motivated in part by its connection to the LTCC; indeed, the LSCC implies the LTCC. The LSCC was disproved by Kim and Park~\cite{KimPark15}. Zhu (see~\cite{KimKwonPark15,KosarPetrickovaReinigerYeager14}) asked whether there exists $k\geq3$ such that every graph $G$ satisfies $\chi_\ell(G^k)=\chi(G^k)$. This, too, was answered negatively for all $k\geq3$ in~\cite{KimKwonPark15,KosarPetrickovaReinigerYeager14}. 

The main result of this paper is that the List Total Colouring Conjecture is false.

\begin{thm}
\label{th:main}
There exists a simple cubic graph $G$ on $20$ vertices such that $\chi''(G)=4$ and $\chi''_\ell(G)=5$.
\end{thm}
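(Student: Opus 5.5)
We will construct $G$ explicitly, guided by the coordinates in the figure macros. The vertex set consists of four blocks $B_1,\dots,B_4$. Each block $B_i$ has five vertices: two poles $a_i,b_i$ and three middle vertices $v_{i,j}$ for $j\neq i$. Inside $B_i$, each pole is joined to all three middle vertices, giving a copy of $K_{2,3}$. In this subgraph the poles have degree $3$ and the middle vertices have degree $2$. Next we add the perfect matching $v_{i,j}v_{j,i}$ over all six pairs $\{i,j\}$. This makes the graph cubic and simple on $20$ vertices, with the six cross edges forming a $K_4$ pattern between blocks. Since $\Delta=3$, we have $\chi''(G)\geq 4$. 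The result of Juvan, Mohar and \v{S}krekovski gives $\chi''_\ell(G)\le 5$. So it remains to exhibit a total $4$-colouring and a bad assignment of $4$-lists.

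\textbf{Total $4$-colouring.} The key fact is that in a total $4$-colouring of a cubic graph, each vertex together with its three incident edges uses all four colours. We first analyse the possible colourings of a single block $K_{2,3}$, recording the colour each cross edge $v_{i,j}v_{j,i}$ is forced to take at $v_{i,j}$ (the colour missing there). Then we choose colourings of the four blocks whose boundary data agree along the six cross edges. A good choice is to colour the six cross edges by a proper $3$-edge-colouring of $K_4$, which is the pattern the figure's edge styles $e1,\dots,e6$ suggest. This is a finite check, which we would present explicitly, for instance as a figure.

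\textbf{List assignment with no colouring.} Here the aim is a gadget argument.

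1. For a single block, give its $5+6$ internal elements lists of size $4$.
2. Prove a lemma saying that, for every list colouring of the block, the pair (colour of $v_{i,j}$, colour missing at $v_{i,j}$) over its three middle vertices lies in a restricted set $S_i$.
3. Each cross edge $v_{i,j}v_{j,i}$ receives its own $4$-list. Choose these lists so that a colour compatible with both endpoints' data forces a consistency relation between $S_i$ and $S_j$.
4. Show that the six relations around $K_4$ cannot all hold, for example by a parity or counting argument resembling the proof that $K_4$ has no "odd" orientation of the required type.

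Concretely, we would try to force each block to behave like a switch. Certain colours appear on every block's lists, so that the block's colouring encodes one bit at each middle vertex, with the three bits subject to a fixed parity constraint. The cross edges would then impose equalities or inequalities of bits. The block parities summed over the four blocks would contradict the sum of the edge constraints.

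\textbf{Main obstacle.} The main difficulty is the gadget lemma: designing the lists on a single $K_{2,3}$ block so that its colourings are rigid enough to encode a parity constraint, yet the uniform list case still admits the total $4$-colouring. We would find candidate lists by a computer search over small colour palettes, consistent with the abstract's description of machine discovery. We would then verify the gadget lemma by a case analysis on the colours of $a_i$ and $b_i$. Once the poles are fixed, the middle vertices and the six internal edges reduce to small list edge-colouring problems. The final global contradiction should then be a short combinatorial argument. If needed, it can be backed up by exhaustive case enumeration, which is small because the interface between blocks is only six edges.
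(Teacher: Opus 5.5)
The lower bound $\chi''_\ell(G)\ge 5$ is the real content of the theorem, and your proposal does not establish it. You give no list assignment and never state the gadget lemma. The ``one bit per middle vertex with a parity constraint'' mechanism is left to a computer search. The missing idea is to give \emph{every} vertex and edge of $B_i$ the same list $\{1,\dots,5\}\setminus\{i\}$. Then a list colouring restricts on $B_i$ to an ordinary total $4$-colouring of $K_{2,3}$, and a pigeonhole lemma applies: in any total $4$-colouring of $K_{2,3}$ the two degree-$3$ vertices get the same colour. Indeed, if $a$ is red and $b$ blue, two of the middle vertices share a colour, say green. Then $a$ and $b$ each need a green edge, and both must go to the third middle vertex, which is impossible.

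Thus $c(a_i)=c(b_i)=s_i\neq i$. At each $v_{i,j}$ the vertex and its two internal edges occupy exactly the three colours of $\{1,\dots,5\}\setminus\{i,s_i\}$, which forces the cross edge at $v_{i,j}$ into $S_i=\{i,s_i\}$. So $S_1,\dots,S_4$ are pairwise intersecting $2$-sets with $i\in S_i$, and they must share a common colour $t$. Otherwise three of them are the three $2$-subsets of a $3$-set, and the fourth, which contains its own index outside that set, cannot meet all three. The global contradiction therefore comes from this Helly-type condition, not from parity.

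The cross-edge lists are then chosen to rule out every $t$. The edge between $B_i$ and $B_{i+1}$ gets $\{1,\dots,5\}\setminus\{i+2\}$ (indices cyclic in $\{1,2,3,4\}$), and the two diagonal cross edges get $\{1,2,3,4\}$. For example, $t=1$ forces $S_3=\{1,3\}$ and $S_4=\{1,4\}$, so the $B_3B_4$ edge would need colour $1$, which its list omits.

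Your concrete plan for $\chi''(G)=4$ also fails. The same lemma shows that in \emph{any} total $4$-colouring of $G$ the poles of $B_i$ share a colour $s_i$. The vertex $v_{i,j}$ and its two internal edges then use the other three colours, so every cross edge leaving $B_i$ is coloured $s_i$. Since every pair of blocks is joined, all $s_i$ coincide and all six cross edges receive the \emph{same} colour. A proper $3$-edge-colouring of the cross edges as a $K_4$ can therefore never extend, and the finite check you defer would come out negative. The figure you were reading in fact draws all six cross edges in colour $4$.

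What works is the following:
\begin{itemize}
\item colour all private vertices and all cross edges $4$;
\item colour the three terminals of each block with distinct colours from $\{1,2,3\}$ so that $c(v_{i,j})\neq c(v_{j,i})$;
\item give $va_i$ and $vb_i$ the cyclic successor and predecessor of $c(v)$ in $\{1,2,3\}$.
\end{itemize}
Your general idea of matching block boundary data would lead there, but only once you notice this rigidity.
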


We present the counterexample and prove Theorem~\ref{th:main} in the next section. 

\section{The Counterexample}

The graph $G$ is constructed as follows; see also Figure~\ref{fig:total-colouring} for a depiction of $G$ and a total colouring with four colours. Let $B_1,B_2,B_3,B_4$ be four vertex-disjoint copies of $K_{2,3}$. The vertices of $B_i$ are $a_i,b_i$, which we call the \emph{private} vertices of $B_i$, and $v_{i,j}$ for all $j\in\{1,2,3,4\}\setminus\{i\}$, which we call the \emph{terminal} vertices of $B_i$. The set $\{a_i,b_i\}$ is the part of the bipartition of $B_i$ of cardinality two. There is exactly one edge between $B_i$ and $B_j$ for $i\neq j$; it connects $v_{i,j}$ to $v_{j,i}$. We call the edge between $B_i$ and $B_j$ a \emph{cross edge}. It is clear that $G$ is cubic, simple, and has $20$ vertices.  

Since $\Delta(G)=3$, we have $\chi''(G)\geq4$. Let us show that $\chi''(G)=4$ by exhibiting a total colouring with $4$ colours. Colour all of the private vertices and the cross edges with colour $4$. Note that no two private vertices are adjacent, no private vertex is incident to a cross edge, and no two cross edges share an endpoint. Thus, colouring these elements with colour $4$ does not create a conflict. No additional elements receive colour $4$. 

Next, we colour the terminal vertices according to the following table:
\[
\begin{array}{c|cccc}
c(v_{i,j}) & j=1 & j=2 & j=3 & j=4 \\ \hline
i=1 & - & 2 & 3 & 1 \\
i=2 & 1 & - & 3 & 2 \\
i=3 & 1 & 2 & - & 3 \\
i=4 & 2 & 3 & 1 & -
\end{array}
\]
Note that the entry in the $i$th row and $j$th column of the table differs from that of the $j$th row and $i$th column for all $i\neq j$. Therefore, adjacent terminal vertices receive different colours. Thus, since the private vertices get colour $4$, the colouring is proper on the vertices. Finally, for each terminal vertex $v$ of $B_i$ that is coloured with some colour $c(v)\in \{1,2,3\}$, we colour the edge $va_i$ with the colour in $\{1,2,3\}$ that follows $c(v)$ and we colour $vb_i$ with the colour in $\{1,2,3\}$ that precedes $c(v)$, viewing the elements of $\{1,2,3\}$ cyclically. By construction, no vertex has the same colour as an edge that is incident to it, and no two incident edges have the same colour either. Therefore, $\chi''(G)=4$.  

\begin{figure}[htbp]
\centering
\resizebox{0.80\linewidth}{!}{%
\begin{tikzpicture}[x=1cm,y=1cm]
  \GraphCoordinates

  % Terminal colours:
  % c(v_{i,j}) = j for distinct i,j in {1,2,3};
  % c(v_{i,4}) = i for i in {1,2,3};
  % c(v_{4,1}), c(v_{4,2}), c(v_{4,3}) = 2,3,1.
  %
  % At a terminal of colour r, its internal edges to a_i and b_i
  % have colours sigma(r) and sigma^2(r), where sigma=(1 2 3).

  % Internal edges of B_1.
  \draw[e3] (a1) -- (p12);
  \draw[e1] (a1) -- (p13);
  \draw[e2] (a1) -- (p14);
  \draw[e1] (b1) -- (p12);
  \draw[e2] (b1) -- (p13);
  \draw[e3] (b1) -- (p14);

  % Internal edges of B_2.
  \draw[e2] (a2) -- (p21);
  \draw[e1] (a2) -- (p23);
  \draw[e3] (a2) -- (p24);
  \draw[e3] (b2) -- (p21);
  \draw[e2] (b2) -- (p23);
  \draw[e1] (b2) -- (p24);

  % Internal edges of B_3.
  \draw[e2] (a3) -- (p31);
  \draw[e3] (a3) -- (p32);
  \draw[e1] (a3) -- (p34);
  \draw[e3] (b3) -- (p31);
  \draw[e1] (b3) -- (p32);
  \draw[e2] (b3) -- (p34);

  % Internal edges of B_4.
  \draw[e3] (a4) -- (p41);
  \draw[e1] (a4) -- (p42);
  \draw[e2] (a4) -- (p43);
  \draw[e1] (b4) -- (p41);
  \draw[e2] (b4) -- (p42);
  \draw[e3] (b4) -- (p43);

  % Shared cross-edge paths; white underlays mark crossings.
  \GraphCrossEdges{4}{4}{4}{4}{4}{4}

  % Vertices are drawn last, covering the ends of their incident edges.
  % The digit inside each symbol is its colour.
  \node[v4] at (a1) {4};
  \node[v4] at (b1) {4};
  \node[v2] at (p12) {2};
  \node[v3] at (p13) {3};
  \node[v1] at (p14) {1};

  \node[v4] at (a2) {4};
  \node[v4] at (b2) {4};
  \node[v1] at (p21) {1};
  \node[v3] at (p23) {3};
  \node[v2] at (p24) {2};

  \node[v4] at (a3) {4};
  \node[v4] at (b3) {4};
  \node[v1] at (p31) {1};
  \node[v2] at (p32) {2};
  \node[v3] at (p34) {3};

  \node[v4] at (a4) {4};
  \node[v4] at (b4) {4};
  \node[v2] at (p41) {2};
  \node[v3] at (p42) {3};
  \node[v1] at (p43) {1};

  \GraphLabels

  % The legend remains informative when printed in greyscale.
  \begin{scope}[yshift=-5.3cm]
    \foreach \c/\x/\desc in {1/-4.8/solid,2/-1.6/dashed,3/1.6/dotted,4/4.8/dash-dot}{
      \node[v\c] at (\x-0.75,0) {\c};
      \draw[e\c] (\x-0.15,0) -- (\x+0.75,0);
      \node[font=\graphsmall] at (\x,-0.72) {\c: \desc};
    }
  \end{scope}
  \GraphFrame
\end{tikzpicture}%
}
\caption{The graph $G$ with a total $4$-colouring. Colours are indicated by the vertex shapes and edge patterns, as described in the legend.}
\label{fig:total-colouring}
\end{figure}

By~\cite[Theorem~3.1]{JuvanMoharSkrekovski98}, $\chi''_\ell(G)\leq 5$. So, to complete the proof of Theorem~\ref{th:main}, we need to exhibit an assignment $L$ of lists of cardinality $4$ to the vertices and edges of $G$ such that $G$ cannot be total coloured from these lists. 

First, for $i\in\{1,2,3,4\}$, assign the list $\{1,2,3,4,5\}\setminus\{i\}$ to every vertex and edge of $B_i$. The cross edges between $B_1$ and $B_3$ and between $B_2$ and $B_4$ have list $\{1,2,3,4,5\}\setminus \{5\}$. Finally, the cross edge from $B_i$ to $B_{i+1}$ receives the list $\{1,2,3,4,5\}\setminus\{i+2\}$, viewing the indices $i\in\{1,2,3,4\}$ cyclically. See Figure~\ref{fig:list-assignment} for a depiction of the list assignment $L$. 

\begin{figure}[htbp]
\centering
\resizebox{0.80\linewidth}{!}{%
\begin{tikzpicture}[x=1cm,y=1cm]
  \GraphCoordinates

  % All 24 internal edges: style i means the list omits colour i.
  \draw[e1] (a1) -- (p12);
  \draw[e1] (a1) -- (p13);
  \draw[e1] (a1) -- (p14);
  \draw[e1] (b1) -- (p12);
  \draw[e1] (b1) -- (p13);
  \draw[e1] (b1) -- (p14);
  \draw[e2] (a2) -- (p21);
  \draw[e2] (a2) -- (p23);
  \draw[e2] (a2) -- (p24);
  \draw[e2] (b2) -- (p21);
  \draw[e2] (b2) -- (p23);
  \draw[e2] (b2) -- (p24);
  \draw[e3] (a3) -- (p31);
  \draw[e3] (a3) -- (p32);
  \draw[e3] (a3) -- (p34);
  \draw[e3] (b3) -- (p31);
  \draw[e3] (b3) -- (p32);
  \draw[e3] (b3) -- (p34);
  \draw[e4] (a4) -- (p41);
  \draw[e4] (a4) -- (p42);
  \draw[e4] (a4) -- (p43);
  \draw[e4] (b4) -- (p41);
  \draw[e4] (b4) -- (p42);
  \draw[e4] (b4) -- (p43);

  % Shared cross-edge paths; white underlays mark crossings.
  \GraphCrossEdges{3}{2}{4}{1}{5}{5}

  % Vertices are drawn last, covering the ends of their incident edges.
  % The digit inside each symbol is its OMITTED colour.
  \node[v1] at (a1) {1};
  \node[v1] at (b1) {1};
  \node[v1] at (p12) {1};
  \node[v1] at (p13) {1};
  \node[v1] at (p14) {1};
  \node[v2] at (a2) {2};
  \node[v2] at (b2) {2};
  \node[v2] at (p21) {2};
  \node[v2] at (p23) {2};
  \node[v2] at (p24) {2};
  \node[v3] at (a3) {3};
  \node[v3] at (b3) {3};
  \node[v3] at (p31) {3};
  \node[v3] at (p32) {3};
  \node[v3] at (p34) {3};
  \node[v4] at (a4) {4};
  \node[v4] at (b4) {4};
  \node[v4] at (p41) {4};
  \node[v4] at (p42) {4};
  \node[v4] at (p43) {4};

  \GraphLabels

  % Omitted colours on all six joining edges. These unframed digits
  % are edge labels, not additional vertices. The side-curve midpoints
  % are (+/-6.15,0), obtained from their cubic Bezier control points.
  \node[edgelabel] at (0,2.8) {3};
  \node[edgelabel] at (0,-2.8) {1};
  \node[edgelabel] at (-6.15,0) {2};
  \node[edgelabel] at (6.15,0) {4};
  \path (p13) -- node[pos=.30,edgelabel] {5} (p31);
  \path (p24) -- node[pos=.30,edgelabel] {5} (p42);

  % Key: a numbered shape and an edge pattern, with their allowed list.
  \begin{scope}[yshift=-5.3cm]
    \foreach \c/\x/\lst in {
      1/-5.2/{2,3,4,5},2/-2.6/{1,3,4,5},3/0/{1,2,4,5},
      4/2.6/{1,2,3,5},5/5.2/{1,2,3,4}}{
      \node[v\c] at (\x-0.65,0) {\c};
      \draw[e\c] (\x-0.10,0) -- (\x+0.80,0);
      \node[font=\graphsmall] at (\x,-0.72) {$\{\lst\}$};
    }
  \end{scope}
  \GraphFrame
\end{tikzpicture}%
}
\caption{An assignment of lists of size four to the vertices and edges of $G$ for which $G$ admits no total colouring. Each vertex and edge has list $\{1,2,3,4,5\}\setminus\{i\}$, where $i$ is indicated by its colour, shape or edge pattern.}
\label{fig:list-assignment}
\end{figure}

Let us show that $G$ cannot be total coloured from $L$. Suppose, to the contrary, that such a colouring $c$ exists. First, we establish the following lemma regarding $K_{2,3}$.

\begin{lem}
\label{lem:sameColour}
In every total colouring of $K_{2,3}$ with $4$ colours, the two vertices in the part of the bipartition of cardinality $2$ receive the same colour. 
\end{lem}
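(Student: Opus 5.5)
Write the parts of $K_{2,3}$ as $\{a,b\}$ and $\{x,y,z\}$, and fix a total colouring $c$ using the colours $\{1,2,3,4\}$. Suppose, for a contradiction, that $c(a)\neq c(b)$. The plan is a counting argument on the colour classes, which avoids any case analysis.

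The first step is to bound the size of each colour class. Let $S_\alpha$ be the set of elements (vertices and edges) coloured $\alpha$. Such a class is a set of pairwise non-adjacent, non-incident elements.

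\begin{itemize}
\item If $S_\alpha$ contains neither $a$ nor $b$, it consists of some vertices among $x,y,z$ together with a matching avoiding those vertices. Each edge of the matching uses one of $a,b$, so the matching has at most $2$ edges. Hence $|S_\alpha|\le 3$, with equality exactly when $S_\alpha$ is two edges plus one vertex of $\{x,y,z\}$, or is all of $\{x,y,z\}$.
\item If $S_\alpha$ contains exactly one of $a,b$, say $a$, then every edge in $S_\alpha$ is incident with $b$ but not with $a$. This is impossible, since every edge is incident with $a$ or $b$ and edges at $b$ pairwise share $b$. So at most one edge at $b$ can appear, and it must avoid the chosen vertices among $x,y,z$. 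Those vertices are non-adjacent to $a$? No: $x,y,z$ are all adjacent to $a$, so $S_\alpha$ contains no vertex of $\{x,y,z\}$. Hence $S_\alpha\subseteq\{a, bw\}$ and $|S_\alpha|\le 2$.
\item If $S_\alpha$ contains both $a$ and $b$, then it contains no vertex of $\{x,y,z\}$ and no edge, so $|S_\alpha|=2$.
\end{itemize}

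The second step is the count. There are $5+6=11$ elements in total. Under the assumption $c(a)\neq c(b)$, the vertices $a$ and $b$ lie in two distinct classes, each of size at most $2$. The remaining two classes have size at most $3$ each. The total is therefore at most $2+2+3+3=10<11$, a contradiction. So $c(a)=c(b)$.

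The only delicate point is the middle case, a class containing $a$ but not $b$. Since $x,y,z$ are all adjacent to $a$, such a class has no vertices of $\{x,y,z\}$ and at most one edge, the latter because all available edges are incident with $b$. This gives the bound of $2$, which is exactly what makes the count fail.
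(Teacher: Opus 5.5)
Your proof is correct, and it takes a different route from the paper's. The paper argues locally. With $a$ red and $b$ blue, the vertices $v_1,v_2,v_3$ are green or yellow, so by pigeonhole two of them, say $v_1,v_2$, are green. Since $a$ has three edges and only three colours are available to them, green must occur at $a$, and it can only be on $av_3$. Likewise green occurs on $bv_3$, which gives two green edges at $v_3$. You instead bound the sizes of colour classes: at most $2$ for a class containing exactly one of $a,b$, and at most $3$ for a class containing neither. Then $c(a)\neq c(b)$ caps the number of coloured elements at $10<11$.

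The paper's argument is shorter and needs no description of total independent sets. Yours avoids the pigeonhole/WLOG step and yields more structure. When $c(a)=c(b)$, the class of $a$ is exactly $\{a,b\}$, and the count $2+3+3+3=11$ forces every other class to have exactly three elements. It also shows the lemma is specific to four colours: with five colours the same count no longer rules out $c(a)\neq c(b)$.

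Some of the write-up should be tidied, although the logic survives.
\begin{itemize}
\item In the middle bullet, ``This is impossible'' is false as written. What you mean, and what you then use, is that at most one edge at $b$ can occur. The ``No:'' self-correction should also be removed.
\item In the first bullet, $|S_\alpha|\le 3$ does not follow merely from the matching having at most two edges. The right reason is that the edges have pairwise distinct endpoints in $\{x,y,z\}$, none of which lies in $S_\alpha$, so the number of vertices plus edges is at most $3$.
\item Your list of equality cases misses sets such as $\{x,y,az\}$, though you never use that list.
\end{itemize}
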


\begin{proof}
Consider a total colouring with $4$ colours, say red, blue, green and yellow. Let the vertices of $K_{2,3}$ be $a,b,v_1,v_2,v_3$ where $\{a,b\}$ is the part of the bipartition of size $2$. 

Suppose, to the contrary, that $a$ is red and $b$ is blue. Then each of the vertices $v_1,v_2,v_3$ must be green or yellow. By the pigeonhole principle, we can assume that $v_1$ and $v_2$ are both green. Since $a$ is incident to three edges, each of the colours blue, green and yellow must appear on one such edge. However, since $v_1$ and $v_2$ are green, the only edge incident to $a$ that can be green is $av_3$. Similarly, the only edge incident to $b$ that can be green is $bv_3$. However, the vertex $v_3$ is now incident to two green edges, which is a contradiction. 
\end{proof}

Now, since all elements of $B_i$ have the same list, Lemma~\ref{lem:sameColour} implies that $c(a_i)=c(b_i)$ for all $i\in\{1,2,3,4\}$. Let $s_i:=c(a_i)=c(b_i)$ and note that $s_i\neq i$ because $i$ is not in the lists of $a_i$ and $b_i$. Define $S_i:=\{i,s_i\}$ for $i\in\{1,2,3,4\}$.

Let us show that, for any distinct pair of indices $i,j\in\{1,2,3,4\}$, the colour of the cross edge $v_{i,j}v_{j,i}$ must be in $S_i$. First, the colour of this edge must differ from the colour of the vertex $v_{i,j}$ and the edges $v_{i,j}a_i$ and $v_{i,j}b_i$. Note that these three colours are distinct, none equals $i$ by definition of the list assignment, and none equals $s_i$ because $a_i$ and $b_i$ are coloured with $s_i$ and each of these three elements is adjacent or incident to at least one of $a_i$ and $b_i$. Therefore, the colour of the cross edge $v_{i,j}v_{j,i}$ is contained in $\{1,2,3,4,5\}\setminus\{c(v_{i,j}),c(v_{i,j}a_i),c(v_{i,j}b_i)\}=S_i$. By symmetry of $i$ and $j$, the colour of $v_{i,j}v_{j,i}$ is contained in $S_j$ as well, and so $S_i\cap S_j\neq\emptyset$. 

Next, we use the fact that $S_i\cap S_j\neq\emptyset$ and $i\in S_i$ for all $i,j\in\{1,2,3,4\}$ to show that there is a colour $t\in \bigcap_{i=1}^4S_i$. First, since $i\in S_i$ and $|S_i|=2$ for all $i\in\{1,2,3,4\}$, these four sets cannot all be equal. So, let $i,j\in\{1,2,3,4\}$ such that $S_i\neq S_j$. Then we can write $S_i=\{t,x\}$ and $S_j=\{t,y\}$ for some $t\in\{1,2,3,4,5\}$. We claim that the other two sets, say $S_\ell$ and $S_k$ for $\{\ell,k\}=\{1,2,3,4\}\setminus\{i,j\}$, must also contain $t$. If $t\notin S_\ell$, then we must have $S_\ell=\{x,y\}$ in order for $S_\ell$ to have non-empty intersection with $S_i$ and $S_j$. Since $i\in S_i$, $j\in S_j$ and $\ell\in S_\ell$, this implies that $S_i,S_j$ and $S_\ell$ are the three $2$-element subsets of $\{i,j,\ell\}$. However, since $k\in S_k$ and $k$ is distinct from $i,j,\ell$, we get that $S_k$ cannot have non-empty intersection with all three of $S_i,S_j,S_\ell$, a contradiction. 

So, let $t\in \bigcap_{i=1}^4S_i$. To complete the proof we show that, for every possible choice of $t$, there is a cross edge that cannot be coloured from its list. Suppose, for example, that $t=1$. Then $S_3=\{1,3\}$ and $S_4=\{1,4\}$. As proven above, the colour of the edge $v_{3,4}v_{4,3}$ must be in $S_3\cap S_4= \{1\}$. However, this is a contradiction because the list of this edge omits colour $1$. Similarly, for any $t\in\{1,2,3,4\}$, the edge $v_{t-1,t-2}v_{t-2,t-1}$ cannot be coloured, where the elements of $\{1,2,3,4\}$ are viewed cyclically. Also, if $t=5$, then the edge $v_{1,3}v_{3,1}$ cannot be coloured. This completes the proof.

Given that $\chi_\ell''(G)$ is not equal to $\chi''(G)$ in general, an interesting direction for future work is to expand the gap between these two quantities. Let us observe that a gap of at least $3$ would disprove the List Edge Colouring Conjecture. By an argument in, e.g.~\cite[pp.~191--192]{Tuza97}, it holds that $\chi''_\ell(G)\leq \chi'_\ell(G)+2$ for every graph $G$. Also, clearly, $\chi''(G)\geq \chi'(G)$. Thus, if $G$ is a graph such that $\chi''_\ell(G)\geq \chi''(G)+3$, then we would have
\[
\chi'_\ell(G)
\geq \chi''_\ell(G)-2
\geq \chi''(G)+1
> \chi'(G),
\]
thereby disproving the LECC. Thus, if we believe that the LECC is true, then we should expect $\chi_\ell''(G)$ to be fairly close to $\chi''(G)$ in general. Perhaps $\chi_\ell''(G)\leq \chi''(G)+1$ holds for all multigraphs $G$? 

\begin{ai}
On September 24, 2026, the author prompted ChatGPT 6 Astra Ultra to disprove the List Total Colouring Conjecture and it produced the counterexample in this paper. The author checked the arguments and rewrote the exposition based on drafts generated by ChatGPT. During rewriting, ChatGPT provided assistance with proofreading, suggesting references and answering questions about the arguments. The figures were also generated by ChatGPT. The author takes full responsibility for correctness. 
\end{ai}


\begin{thebibliography}{15}

\bibitem{BorodinKostochkaWoodall97}
O.~V. Borodin, A.~V. Kostochka, and D.~R. Woodall.
List edge and list total colourings of multigraphs.
\emph{J. Combin. Theory Ser. B}, 71(2):184--204, 1997.

\bibitem{GravierMaffray97}
S.~Gravier and F.~Maffray.
Choice number of $3$-colorable elementary graphs.
\emph{Discrete Math.}, 165/166:353--358, 1997.
Graphs and combinatorics (Marseille, 1995).

\bibitem{HetheringtonWoodall06}
T.~J. Hetherington and D.~R. Woodall.
Edge and total choosability of near-outerplanar graphs.
\emph{Electron. J. Combin.}, 13(1):Research Paper 98, 7~pp., 2006.

\bibitem{HiltonJohnson99}
A.~J.~W. Hilton and P.~D. Johnson, Jr.
The Hall number, the Hall index, and the total Hall number of a graph.
\emph{Discrete Appl. Math.}, 94(1--3):227--245, 1999.

\bibitem{JuvanMoharSkrekovski98}
M.~Juvan, B.~Mohar, and R.~{\v{S}}krekovski.
List total colourings of graphs.
\emph{Combin. Probab. Comput.}, 7(2):181--188, 1998.

\bibitem{KimKwonPark15}
S.-J. Kim, Y.~S. Kwon, and B.~Park.
Chromatic-choosability of the power of graphs.
\emph{Discrete Appl. Math.}, 180:120--125, 2015.

\bibitem{KimPark15}
S.-J. Kim and B.~Park.
Counterexamples to the list square coloring conjecture.
\emph{J. Graph Theory}, 78(4):239--247, 2015.

\bibitem{KosarPetrickovaReinigerYeager14}
N.~Kosar, S.~Petrickova, B.~Reiniger, and E.~Yeager.
A note on list-coloring powers of graphs.
\emph{Discrete Math.}, 332:10--14, 2014.

\bibitem{KostochkaWoodall01}
A.~V. Kostochka and D.~R. Woodall.
Choosability conjectures and multicircuits.
\emph{Discrete Math.}, 240(1--3):123--143, 2001.

\bibitem{KostochkaWoodall02}
A.~V. Kostochka and D.~R. Woodall.
Total choosability of multicircuits. I, II.
\emph{J. Graph Theory}, 40(1):26--43, 44--67, 2002.

\bibitem{Tuza97}
Zs.~Tuza.
Graph colorings with local constraints---a survey.
\emph{Discuss. Math. Graph Theory}, 17(2):161--228, 1997.

\bibitem{Woodall01}
D.~R. Woodall.
List colourings of graphs.
In J.~W.~P. Hirschfeld, editor,
\emph{Surveys in Combinatorics, 2001},
volume~288 of \emph{London Mathematical Society Lecture Note Series},
pages 269--301.
Cambridge University Press, 2001.

\bibitem{Woodall06}
D.~R. Woodall.
Total 4-choosability of series-parallel graphs.
\emph{Electron. J. Combin.}, 13(1):Research Paper 97, 36~pp., 2006.

\bibitem{Woodall07}
D.~R. Woodall.
Some totally 4-choosable multigraphs.
\emph{Discuss. Math. Graph Theory}, 27(3):425--455, 2007.

\bibitem{Woodall10}
D.~R. Woodall.
The average degree of a multigraph critical with respect to
edge or total choosability.
\emph{Discrete Math.}, 310(6--7):1167--1171, 2010.

\end{thebibliography}
\end{document}